\providecommand{\U}[1]{\protect\rule{.1in}{.1in}}
\theoremstyle{plain}
\newtheorem{theorem}{Theorem}
\newtheorem{lemma}{Lemma}
\theoremstyle{definition}
\newtheorem{definition}{Definition}
\newtheorem{remark}{Remark}
\numberwithin{equation}{section}
\begin{document}
\title{Generalizing Krawtchouk polynomials using Hadamard matrices}

\author{Peter S Chami }
\author{Bernd Sing}
\address{Department of Computer Science, Mathematics and Physics.\\
Faculty of Science and Technology.\\
The University of the West Indies\\
Cave Hill \\
St.\ Michael \\
Barbados W.I. \\
 }
\email{peter.chami@cavehill.uwi.edu}
\email{bernd.sing@cavehill.uwi.edu}

\author{Norris Sookoo}
\address{The University of Trinidad and Tobago \\
O'Meara Campus \\
Arima\\
Trinidad W.I.}
\email{norris.sookoo@utt.edu.tt }

\date{\today}

\subjclass{05E35 (33C47)}

\keywords{Orthogonal, polynomials, circulant matrices.}

\begin{abstract}
We investigate polynomials, called $m$-polynomials, whose generator polynomial has coefficients that can be arranged as a matrix, where $q$ is a positive integer greater than one. Orthogonality relations are established and coefficients are obtained for the expansion of a polynomial in terms of $m$-polynomials. We conclude this article by an implementation in MATHEMATICA of $m$-polynomials and the results obtained for them.
\end{abstract}

\maketitle

\section{Introduction}

Matrices have been the subject of much study, and large bodies of results have
been obtained about them. We study the interplay between the theory of
matrices and the theory of orthogonal polynomials. Interesting results have been
obtained for Krawtchouk polynomials \cite{Kra29} by \cite{Szego}, \cite{Eagleson} and
\cite{Dunkl} (also see the review article \cite{FK05}) and also for generalized Krawtchouk polynomials by
\cite{Williams}. More recently, \cite{Krasikov} obtained conditions for the
existence of integral zeros of binary Krawtchouk polynomials. \cite{Sookoo}
obtained properties for generalized Krawtchouk polynomials. Other
generalizations of binary Krawtchouk polynomials have also been considered.
\cite{Habsieger} generalized some properties of binary Krawtchouk polynomials
to $q$-Krawtchouk polynomials. \cite{Atakis} derived orthogonality relations for
quantum and $q$-Krawtchouk polynomials and showed that affine $q$-Krawtchouk
polynomials are dual to quantum $q$-Krawtchouk polynomials. In this paper, we
define and study a generalizations of Krawtchouk polynomials, namely, $m$-polynomials.

The Krawtchouk polynomial $K_{k}\left(  x\right)  $ is given by
\[
K_{k}\left(  x\right)  =\sum_{j=0}^{k}\left(  -1\right)  ^{j}\binom{x}%
{j}\binom{n-x}{k-j}%
\]
where $n$ is a natural number and $x\in\left\{  0,1,\ldots,n\right\}$. The
generator polynomial is
\[
\left(  1+z\right)  ^{n-x}\left(  1-z\right)  ^{x}=\sum_{k=0}^{n}K_{k}\left(
x\right)  z^{k}%
\]
The generalized Krawtchouk polynomial $K_{p}\left(  s\right)  $ is obtained by
generalizing the above generator polynomial as follows%
\[
\prod\limits_{l=0}^{q-1}\left(  \sum_{i=0}^{q-1}\chi\left(  \overline{\omega}%
_{i}\overline{\omega}_{l}\right)  z_{i}\right)  ^{s_{i}}=\sum_{p\in
V\left(  n,q\right)  }K_{p}\left(  s\right)  z^{p}%
\]
where $\sum s_{i}=\sum p_{i}=n$, $q$ is a prime power, the $z_{i}$ are
indeterminate, the field $GF\left(  q\right)$ with $q$ elements is $\left\{
0,\overline{\omega}_{1},\overline{\omega}_{2},\ldots,\overline{\omega}%
_{q-1}\right\}$, and $\chi$ is a character.

The above information about Krawtchouk polynomials and generalized Krawtchouk
polynomials was taken from \cite{Williams}.

If we replace the $\chi\left(  \overline{\omega}_{i}\overline{\omega}%
_{l}\right)  $ by arbitrary scalars in the last equation, we obtain the generator polynomial of
$m$-polynomials $MG\left(  p,s\right)$, see Definition~\ref{def:2} below. These $m$-polynomials 
are the subject of study in this article.

In Section~\ref{sec:def} we present relevant notations and definitions. In Section~\ref{sec:GP}, we introduce the generator polynomial. The associated matrix of coefficients $G$ can be any square matrix, and so the question that immediately arises is how are the properties of the $m$-polynomials related to the properties of $G$. We will establish that if $G$ is a generalized Hadamard matrix, then the associated $m$-polynomials satisfy orthogonality conditions. We also obtain coefficients for the expansion of a polynomial in terms of $m$-polynomials in Section~\ref{sec:poly}. Finally,  in Section~\ref{sec:math} we implement the results obtained here in MATHEMATICA, so the reader may easily obtain and explore $m$-polynomials for any matrix $G$.

\section{Definitions and Notations}\label{sec:def}

In this article,  $\mathbb{N}_{0}^{}$ denotes $\left\{  0,1,2,3,\ldots\right\}$. We use the convention that if $p\in \mathbb{N}_{0}^{q}$, then $p$ has components $\left(  p_{0},p_{1},\ldots,p_{q-1}\right)$. So, if $r_{\left(  i\right)  }\in \mathbb{N}_{0}^{q}$, then $r_{\left(  i\right)  }$ denotes $\left(r_{i,0},r_{i,1},\ldots,r_{i,q-1}\right)$.

We use the $\ell^1$-norm (the ``taxicab-metric'') to measure the \emph{length} $|p|$ of  $p\in \mathbb{N}_{0}^{q}$, i.e., $\left\vert p\right\vert =\sum\limits_{i=0}^{q-1}p_{i}$. We define the \emph{set of weak compositions of $n$ into $q$ numbers} by  $V\left(  n,q\right)  =\left\{  p\in
\mathbb{N}_{0}^{q}\mathbin:\left\vert p\right\vert =n\right\}$; in other words, $V(n,q)$ the subset of $q$-dimensional nonnegative vectors of length $n$. We note that the set $V(n,q)$ has cardinality $\binom{n+q-1}{q-1}$. For $p\in \mathbb{N}_{0}^{q}$, we use the multi-index notation $p!=\prod\limits_{i=0}^{q-1}p_{i}!$. Similarly, for a variable $z=\left(  z_{0},z_{1},\ldots,z_{q-1}\right)  \in \mathbb{C}^{q}$ and $p\in\mathbb{N}_{0}^{q}$, we write $z^{p}=\prod_{j=0}^{q-1}z_{j}^{p_{i}}$ (where the convention $0^0=1$ is used). We note that the multinomial theorem reads 
\[
\left(\sum\limits_{i=0}^{q-1} z_i\right)^n = \sum\limits_{p\in V(n,q)} \binom{n}{p}\cdot z^{p} =  \sum\limits_{p\in V(n,q)} \frac{n!}{p_0!\cdots p_{q-1}!}\cdot  z_0^{p_0}\cdots z_{q-1}^{p_{q-1}}.
\]

In the following, $G$ denotes an arbitrary $q\times q$ matrix. We use the following convention to refer to the entries of a $q\times q$ matrix \cite{Pdelsarte}: The entry in the $i$th row and $j$th column is called the \emph{$\left(i-1,j-1\right)$ entry} where $ i,j=1,2,\ldots,q$. Thus, the $\left(i,j\right)$ entry of $G$ is denoted by $g_{ij}$ where $i,j=0,1,\ldots,q-1$. Given a matrix $G$, the matrix that remains when the first row and the first column of $G$ are removed is called the \emph{core} of $G$.

The next definition is well known.

\begin{definition}
For an integer $q$ greater than one, a $q\times q$ matrix $H$ is called a \emph{generalized Hadamard matrix} if $H\overline{H}^{T}=qI_{q}$, where $\overline{H}^{T}$ is the complex conjugate transpose of $H$, and $I_{q}$ is the $q\times q$ identity matrix.
\end{definition}

We now define the $m$-polynomial with respect to a matrix $G$.

\begin{definition}\label{def:2}
Let $s=\left(  s_{0},s_{1,},\ldots,s_{q-1}\right)$ and $p=\left(  p_{0},p_{1},\ldots,p_{q-1}\right)  $ be elements of $V\left(  n,q\right)  $. The \emph{$m$-polynomial in $s$ with respect to $G$ having parameter $p$} is denoted $MG\left(  p;s\right) $ or $MG\left(  p_{0},p_{1}, \ldots,p_{q-1};s_{0},s_{1},\ldots,s_{q-1}\right)  $ and given by
\[
MG\left(  p;s\right)  =\sum_{r_{i,j}}\frac{s!} {\prod\limits_{i,j}r_{i,j}!}\prod_{a=0}^{q-1}\prod_{b=0}^{q-1}g_{ab}^{r_{a,b}}
= s!\, \sum_{r_{i,j}}\prod_{a=0}^{q-1}\prod_{b=0}^{q-1}\frac{g_{ab}^{r_{a,b}}}{r_{a,b}!}
\]
where the summation is taken over all sets of non-negative integers $r_{i,j}$ (with $ i,j=0,1,\ldots,q-1$)  satisfying
\[
\sum_{j=0}^{q-1}r_{i,j}=s_{i},\quad i=0,1,\ldots,q-1
\]
and%
\[
\sum_{i=0}^{q-1}r_{i,j}=p_{j},\quad j=0,1,\ldots,q-1.
\]
\end{definition}

\section{The Generator Polynomial \& Orthogonality Relations}\label{sec:GP}

The values of the $m$-polynomial with respect to a matrix $G$ can be derived from a generator polynomial.

\begin{theorem}[Generator polynomial]\label{thm:pol}
Let $s=\left(  s_{0},s_{1},\ldots,s_{q-1}\right)  \in V\left(  n,q\right)  $ and let $G=\left(  g_{ij}\right)$ be a $q\times q$ matrix. Then,
\[
\prod_{i=1}^{q-1}\left(  \sum_{j=0}^{q-1}g_{ij}z_{j}\right)  ^{s_{i}}=\sum_{p\in
V\left(  n,q\right)  }MG\left(  p;s\right)  z^{p}%
\]
where $p=\left(  p_{0},p_{1},\ldots,p_{q-1}\right)$.
\end{theorem}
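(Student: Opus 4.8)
The plan is to expand the left-hand side one factor at a time with the multinomial theorem, multiply the expansions together, and then reorganize the resulting multiple sum so that the coefficient of each monomial $z^{p}$ is recognized as $MG(p;s)$.

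First I would apply the multinomial theorem recorded in Section~\ref{sec:def} to each factor $\bigl(\sum_{j=0}^{q-1} g_{ij}z_{j}\bigr)^{s_{i}}$ separately, obtaining
\[
\left(\sum_{j=0}^{q-1} g_{ij}z_{j}\right)^{s_{i}}=\sum_{r_{(i)}\in V(s_{i},q)}\frac{s_{i}!}{r_{i,0}!\cdots r_{i,q-1}!}\prod_{j=0}^{q-1} g_{ij}^{r_{i,j}}\,z_{j}^{r_{i,j}},
\]
where $r_{(i)}=(r_{i,0},\ldots,r_{i,q-1})$ ranges over the weak compositions of $s_{i}$ into $q$ parts. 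Taking the product over the row index $i=0,1,\ldots,q-1$ turns a product of $q$ sums into a single sum over all integer arrays $(r_{i,j})_{i,j=0}^{q-1}$ whose $i$th row sums to $s_{i}$ — precisely the first family of constraints in Definition~\ref{def:2}.

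Next I would separate each summand into its scalar part and its monomial part. The scalars combine to $\frac{s!}{\prod_{i,j} r_{i,j}!}\prod_{a,b} g_{ab}^{r_{a,b}}$, using the multi-index identity $\prod_{i} s_{i}!=s!$, while the powers of the indeterminates combine to $\prod_{j=0}^{q-1} z_{j}^{\sum_{i} r_{i,j}}$. The decisive observation is that the exponent of $z_{j}$ in a given summand is exactly the $j$th column sum of the array. I would then collect terms according to the column-sum vector $p=(p_{0},\ldots,p_{q-1})$ with $p_{j}=\sum_{i} r_{i,j}$; since $\sum_{j} p_{j}=\sum_{i,j} r_{i,j}=\sum_{i} s_{i}=n$, every such $p$ lies in $V(n,q)$ and contributes the monomial $z^{p}$. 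For each fixed $p$ the arrays being summed are exactly those with row sums $s_{i}$ and column sums $p_{j}$, so the coefficient of $z^{p}$ is $MG(p;s)$ by Definition~\ref{def:2}, which is the claimed identity.

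I expect the only genuine difficulty to be bookkeeping rather than any conceptual hurdle: the passage from a product of $q$ multinomial expansions to a single sum over integer arrays with prescribed row margins, followed by the regrouping of that sum by column margins. The link between the two constraint families of Definition~\ref{def:2} and the final monomial is carried entirely by the identity $\prod_{j} z_{j}^{\sum_{i} r_{i,j}}=z^{p}$, and once the exponent of $z_{j}$ is identified with the column sum $p_{j}$, the theorem follows immediately.
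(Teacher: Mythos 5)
Your proposal is correct and follows essentially the same route as the paper's own proof: expand each factor $\bigl(\sum_{j=0}^{q-1} g_{ij}z_{j}\bigr)^{s_{i}}$ by the multinomial theorem, merge the product of sums into a single sum over arrays $(r_{i,j})$ with row sums $s_{i}$, and regroup by the column sums $p_{j}=\sum_{i} r_{i,j}$ to identify the coefficient of $z^{p}$ with $MG(p;s)$ via Definition~\ref{def:2}. If anything, you are slightly more careful than the paper, since you explicitly verify $\sum_{j}p_{j}=n$ (so $p\in V(n,q)$) and you correctly read the product in the statement as running over $i=0,1,\ldots,q-1$, matching the paper's proof rather than the misprinted lower limit $i=1$ in the theorem statement.
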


\begin{proof}%
This is an application of the multinomial theorem (recall that for $r_{\left(  i\right)  } \in \mathbb{N}_{0}^{q}$ we have $r_{\left(  i\right)  }=\left(  r_{i,0},r_{i,1},\ldots,r_{i, q-1  }\right)$):
\begin{eqnarray*}
\prod_{i=0}^{q-1}\left(  \sum_{j=0}^{q-1}g_{ij}z_{j}\right)  ^{s_{i}} 
& = & \prod_{i=0}^{q-1}\left\{  \sum_{r_{\left(  i\right)  }\in V\left(  s_{i},q\right)}\frac{s_{i}!}{r_{\left(  i\right)  }!}\left[  \prod_{j=0}^{q-1}\left(g_{ij}z_{j}\right)  ^{r_{i,j}}\right]  \right\} \\
 & = & \sum_{\substack{r_{\left(  i\right)  }\in V\left(  s_{i},q\right), \\ i=0,1,\ldots,q-1}} \frac{s!}{\prod\limits_{i,j}r_{i,j}!}\left(  \prod_{a=1}^{q-1}\prod_{b=0}^{q-1} g_{ab}^{r_{a,b}}\right)  \prod_{k=0}^{q-1}z_{k}^{r_{0,k}+r_{1,k}+\ldots+r_{q-1,k}}\\
& = & \sum_{p\in V\left(  n,q\right)  }MG\left(  p;s\right)  z^{p}%
\end{eqnarray*}
where $p_{k}=r_{0,k}+r_{1,k}+\ldots+r_{q-1,k}$.
\end{proof}

\begin{remark}
Theorem~\ref{thm:pol} can also be used for summation results of $m$-polynomials over $V(n,q)$: using $z=(1,\ldots,1)$ we obtain
\[
\sum_{p\in V\left(  n,q\right)  }MG\left(  p;s\right) =  \prod_{i=0}^{q-1}\left(  \sum_{j=0}^{q-1}g_{ij}\right)  ^{s_{i}},
\]
i.e., the product of the $s_i$th power of the $i$th column sum of $G$. 
\end{remark}

For a generalized Hadamard matrix $G$, the multinomial theorem yields the following orthogonality relation for the corresponding $m$-polynomials.

\begin{theorem}[Orthogonality relation]\label{thm:OR}
If $G$ is a generalized Hadamard matrix, then the $m$-polynomials $MG\left(
p,s\right)  ,\left(  p,s\in V\left(  n,q\right)  \right)$, satisfy the
orthogonality relations:%
\[
\sum_{s\in V\left(  n,q\right)  }\frac{1}{s!}MG\left(  p;s\right)
\overline{MG\left(  t;s\right)  }=\frac{q^{n}}{p!}\,\delta_{p,t}%
\]
where%
\[
s   =\left(  s_{0},s_{1},\ldots,s_{q-1}\right), \
p  =\left(  p_{0},p_{1},\ldots,p_{q-1}\right) \ \text{and} \ 
t  =\left(  t_{0},t_{1},\ldots,t_{q-1}\right),
\]
and 
\[
\delta_{p,t}=\prod_{i=0}^{q-1}\delta_{p_{i},t_{i}} = \begin{cases} 1 & \text{if}\ p_i=t_i\ \text{for all}\ i,\\ 0 & \text{otherwise},\end{cases}
\]
denotes \emph{Kronecker's delta}.
\end{theorem}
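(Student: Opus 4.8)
The plan is to run a bivariate generating-function argument, packaging the two factors $MG(p;s)$ and $\overline{MG(t;s)}$ into generator polynomials (Theorem~\ref{thm:pol}) in two independent sets of variables $z=(z_0,\ldots,z_{q-1})$ and $w=(w_0,\ldots,w_{q-1})$, and then reading off the desired identity by comparing coefficients of $z^pw^t$. First I would observe that, since $s!$ is real, complex conjugation of the generator identity gives $\sum_{p}\overline{MG(p;s)}\,w^p=\prod_{i=0}^{q-1}\bigl(\sum_{j=0}^{q-1}\overline{g_{ij}}\,w_j\bigr)^{s_i}$; that is, the conjugated $m$-polynomials are precisely those attached to the matrix $\overline{G}$.

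Next I would multiply the two generator identities for a fixed $s$, obtaining $\bigl(\sum_p MG(p;s)z^p\bigr)\bigl(\sum_t\overline{MG(t;s)}w^t\bigr)=\prod_{i=0}^{q-1}A_i^{s_i}$, where $A_i=\bigl(\sum_j g_{ij}z_j\bigr)\bigl(\sum_k\overline{g_{ik}}w_k\bigr)$. The crucial manoeuvre is then to weight by $1/s!$ and sum over $s\in V(n,q)$: because $1/s!=\frac{1}{n!}\binom{n}{s}$, the multinomial theorem runs in reverse and collapses the double sum to $\frac{1}{n!}\bigl(\sum_{i=0}^{q-1}A_i\bigr)^n$.

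The Hadamard hypothesis enters here. Expanding $\sum_i A_i=\sum_{j,k}z_jw_k\sum_i g_{ij}\overline{g_{ik}}$, I recognize the inner sum as the $(k,j)$ entry of $\overline{G}^TG$. From the defining relation $G\overline{G}^T=qI_q$ one sees that $\tfrac1q\overline{G}^T$ is a right inverse of $G$, hence (for square matrices) a two-sided inverse, so $\overline{G}^TG=qI_q$ as well; thus $\sum_i g_{ij}\overline{g_{ik}}=q\,\delta_{jk}$ and $\sum_i A_i=q\sum_j z_jw_j$. Feeding this back, the whole expression becomes $\frac{q^n}{n!}\bigl(\sum_j z_jw_j\bigr)^n$, and one final application of the multinomial theorem rewrites it as $q^n\sum_{p\in V(n,q)}\frac{1}{p!}z^pw^p$. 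Comparing the coefficient of $z^pw^t$ on the two sides yields the claimed orthogonality relation.

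I expect the main obstacle to be bookkeeping rather than conceptual: making sure the reverse-multinomial step is legitimate (that summing $\prod_iA_i^{s_i}/s!$ over all $s$ of length $n$ really reconstitutes $(\sum_iA_i)^n/n!$), and correctly deciding which of $G\overline{G}^T$, $\overline{G}^TG$, $G^T\overline{G}$ produces the factor $q\,\delta_{jk}$ with the indices placed correctly. Once the identity $\overline{G}^TG=qI_q$ is secured via the invertibility argument, the remainder is two clean invocations of the multinomial theorem.
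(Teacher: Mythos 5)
Your proposal is correct and is essentially the paper's own argument run in reverse: the paper defines $J=\sum_{i=0}^{q-1}\bigl(\sum_j g_{ij}z_j\bigr)\bigl(\sum_k\overline{g_{ik}}y_k\bigr)$ and expands $J^n$ in two ways via the multinomial theorem, which is exactly your collapse of $\sum_s\frac{1}{s!}\prod_i A_i^{s_i}$ to $\frac{1}{n!}\bigl(\sum_i A_i\bigr)^n$ followed by the Hadamard substitution and comparison of coefficients of $z^p w^t$. Your explicit check that $G\overline{G}^T=qI_q$ forces the column orthogonality $\overline{G}^TG=qI_q$ (right inverse of a square matrix is two-sided) is a step the paper uses silently, so your write-up is, if anything, slightly more careful on that point.
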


\begin{proof}
Let $z=\left(z_0,z_1,\ldots,z_{q-1}\right),\ y=\left(y_0,y_1,\ldots,y_{q-1}\right)\in\mathbb{C}^q$ and define
\[
J=\sum_{i=0}^{q-1}\left(  \sum_{j=0}^{q-1}g_{ij}z_{j}\right)  \left(\sum_{k=0}^{q-1}\overline{g_{ik}}y_{k}\right)=\sum_{i=0}^{q-1}\sum_{j=0}^{q-1}\sum_{k=0}^{q-1} g_{ij}\overline{g_{ik}}\,z_{j}y_{k}.
\]
Then, on the one hand we have
\begin{equation*}
J=q\sum_{j=0}^{q-1}z_{j}y_{j}
\end{equation*}
since $G$ is a generalized Hadamard matrix. Consequently, 
\[
J^{n}=q^{n}\left\{  \sum_{s\in V\left(  n,q\right)  }\frac{n!}{s!}\left(
z_{0}y_{0}\right)^{s_{0}}\left(  z_{1}y_{1}\right)^{s_{1}}\ldots\left(
z_{q-1}y_{q-1}\right)^{s_{q-1}}\right\}.
\]

On the other hand, the multinomial theorem yields
\begin{eqnarray*}
J^{n} & = &  \sum_{s\in V\left(  n,q\right)  }\frac{n!}{s!} \prod_{i=0}^{q-1} \left(  \sum_{j=0}^{q-1}g_{ij}\,z_{j}\right)^{s_i} \left(\sum_{k=0}^{q-1}\overline{g_{ik}}\,y_{k}\right)^{s_i} \\
& = &  \sum_{s\in V\left(  n,q\right)  }\frac{n!}{s!}\left[  \sum_{p\in
V\left(  n,q\right)  }MG\left(  p;s\right)\,  z^{p}\right] \, \left[  \sum_{t\in
V\left(  n,q\right)  }\overline{MG\left(  t;s\right)  }\,y^{t}\right].
\end{eqnarray*}

Equating coefficients of $z^{p}y^{t}$ in the two above expressions for $J^{n}$, we obtain the desired result.
\end{proof}

If $G$ is a generalized Hadamard matrix and also satisfies certain additional
conditions, then it is possible to establish that the corresponding
$m$-polynomials satisfy additional orthogonality conditions. We use the
following three results in proving this.

\begin{lemma}\label{lem:1}
If $G$ is symmetric, i.e.,\ $G=G^T$, then $MG\left(  s;p\right)  =\frac{p!}{s!}\,MG\left(
p;s\right)$.
\end{lemma}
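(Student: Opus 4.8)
The plan is to read both $MG(s;p)$ and $MG(p;s)$ directly off Definition~\ref{def:2} and exhibit an explicit weight-preserving bijection between the two index sets over which the defining sums run. Writing out $MG(p;s)$, the summation index is an array $(r_{i,j})$ of nonnegative integers with prescribed row sums $\sum_j r_{i,j}=s_i$ and column sums $\sum_i r_{i,j}=p_j$; writing out $MG(s;p)$, the summation index is an array $(r'_{i,j})$ with row sums $\sum_j r'_{i,j}=p_i$ and column sums $\sum_i r'_{i,j}=s_j$. The obvious candidate matching is transposition of the index array, $r'_{i,j}=r_{j,i}$.

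First I would check that transposition is a bijection between the two index sets: the row sums of $r'$ are the column sums of $r$ (hence equal to the $p_i$), and the column sums of $r'$ are the row sums of $r$ (hence equal to the $s_j$), so $r'$ satisfies exactly the constraints appearing in $MG(s;p)$. The map is clearly involutive, so it is a bijection.

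Next I would compare the summands. The multinomial denominator is unchanged, since $\prod_{i,j} r'_{i,j}! = \prod_{i,j} r_{j,i}! = \prod_{i,j} r_{i,j}!$ after reindexing. For the monomial, $\prod_{a,b} g_{ab}^{r'_{a,b}} = \prod_{a,b} g_{ab}^{r_{b,a}} = \prod_{a,b} g_{ba}^{r_{a,b}}$, and here the hypothesis $G=G^T$ (so $g_{ba}=g_{ab}$) is used to conclude this equals $\prod_{a,b} g_{ab}^{r_{a,b}}$. Thus each summand is preserved except for the overall numerical prefactor, which is $p!$ in $MG(s;p)$ and $s!$ in $MG(p;s)$.

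Finally, factoring out the common prefactor as in the second displayed form of Definition~\ref{def:2} gives $MG(s;p)=p!\sum_{r}(\cdots)$ and $MG(p;s)=s!\sum_{r}(\cdots)$ with identical residual sums $\sum_{r}(\cdots)$; dividing yields $MG(s;p)=\frac{p!}{s!}\,MG(p;s)$. I do not expect a genuine obstacle here: the only things to get right are the bookkeeping of which sum of the index array attaches to which of $s$ and $p$, and the observation that plain symmetry of $G$—rather than any stronger property—is exactly what is needed to equate the two monomials under the transposition.
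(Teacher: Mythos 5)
Your proposal is correct and is essentially the paper's own argument: the paper likewise reads both sides off Definition~\ref{def:2}, substitutes the transposed array ($r_{i,j}=h_{j,i}$, swapping the row-sum and column-sum constraints) and uses $g_{ab}=g_{ba}$ to identify the summands, with the prefactors $p!$ and $s!$ accounting for the factor $\frac{p!}{s!}$. Your bijection phrasing versus the paper's change-of-summation-variable phrasing is only a cosmetic difference.
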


\begin{proof}
By Definition~\ref{def:2}, we have (where $\sum\limits_{j=0}^{q-1}h_{i,j}=p_{i}$ and $\sum\limits_{i=0}^{q-1}h_{i,j}=s_{j}$) 
\begin{eqnarray*}
MG\left(  s;p\right)  & = & p!\, \sum_{h_{i,j}}\prod_{a=0}^{q-1}\prod_{b=0}^{q-1}\frac{g_{ab}^{h_{a,b}}}{h_{a,b}!}
 \\
 & = & \frac{p!}{s!}\,s!\,\sum_{h_{i,j}}\prod_{a=0}^{q-1}\prod_{b=0}^{q-1}\frac{g_{ba}^{h_{a,b}}}{h_{a,b}!}
\\
 & = & \frac{p!}{s!}\,s!\,\sum_{r_{i,j}}\prod_{b=0}^{q-1}\prod_{a=0}^{q-1}\frac{g_{ba}^{r_{b,a}}}{r_{b,a}!}
\\
& = & \frac{p!}{s!}\,MG\left(  p;s\right),  
\end{eqnarray*}
where we define $r_{i,j}=h_{j,i}$ (and thus $\sum\limits_{j=0}^{q-1}r_{i,j}=s_{i}$ and $\sum\limits_{i=0}^{q-1}r_{i,j}=p_{j}$), and since $g_{ab}=g_{ba}$.
\end{proof}

The following results can be obtained in a similar manner to Lemma~\ref{lem:1}.

\begin{lemma}\label{lem:2}
If $G$ has a symmetric core, $g_{0,j}=1$ for $j=0,\ldots,q-1$ and  $g_{i,0}=-1$ for $i=1,\ldots,q-1$,
then $MG\left(  s;p\right)  =(-1)^{s_0+p_0}\,\frac{p!}{s!}MG\left(  p;s\right)$.
\end{lemma}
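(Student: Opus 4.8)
The plan is to follow the same template as the proof of Lemma~\ref{lem:1}, performing the transposition substitution $r_{i,j}:=h_{j,i}$, but now carefully tracking the sign discrepancy between $g_{ab}$ and $g_{ba}$ along the first row and first column, where $G$ fails to be symmetric. Starting from Definition~\ref{def:2}, I would write
\[
MG\left(  s;p\right)  = p!\sum_{h_{i,j}}\prod_{a=0}^{q-1}\prod_{b=0}^{q-1}\frac{g_{ab}^{h_{a,b}}}{h_{a,b}!},
\]
where the $h_{i,j}$ have row sums $\sum_{j}h_{i,j}=p_{i}$ and column sums $\sum_{i}h_{i,j}=s_{j}$. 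Substituting $r_{i,j}=h_{j,i}$ interchanges the two families of constraints, so the new variables satisfy $\sum_{j}r_{i,j}=s_{i}$ and $\sum_{i}r_{i,j}=p_{j}$ (exactly the constraints defining $MG(p;s)$), and after renaming the dummy product indices $a\leftrightarrow b$ the factor becomes $\prod_{a,b}g_{ba}^{r_{a,b}}/r_{a,b}!$. In Lemma~\ref{lem:1} full symmetry let one replace $g_{ba}$ by $g_{ab}$ at once; here that step is where the extra sign is born.

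The genuinely new computation is to compare $\prod_{a,b}g_{ba}^{r_{a,b}}$ with $\prod_{a,b}g_{ab}^{r_{a,b}}$. I would split the index set into the corner $(0,0)$, the first row $(0,b)$ with $b\geq 1$, the first column $(a,0)$ with $a\geq 1$, and the core indices $a,b\geq 1$. On the corner $g_{00}=1$ contributes nothing; on the core the symmetry hypothesis $g_{ab}=g_{ba}$ means those factors agree; and on the first row and first column the hypotheses $g_{0,j}=1$ and $g_{i,0}=-1$ produce all of the difference. Collecting these contributions, I expect
\[
\prod_{a,b}g_{ba}^{r_{a,b}} = (-1)^{\sum_{b=1}^{q-1} r_{0,b}+\sum_{a=1}^{q-1} r_{a,0}}\,\prod_{a,b}g_{ab}^{r_{a,b}}.
\]

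The crux is then to evaluate this exponent using the constraints on $r$, and this is the step I expect to require the most care. Because $\sum_{b=0}^{q-1}r_{0,b}=s_{0}$ and $\sum_{a=0}^{q-1}r_{a,0}=p_{0}$, the two partial sums equal $s_{0}-r_{0,0}$ and $p_{0}-r_{0,0}$, so the exponent is $s_{0}+p_{0}-2r_{0,0}\equiv s_{0}+p_{0}\pmod 2$. The key point is that the diagonal variable $r_{0,0}$ enters with an even coefficient and hence disappears modulo $2$: the sign is the \emph{constant} $(-1)^{s_{0}+p_{0}}$, independent of the summation variables.

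With that observation the proof closes mechanically: the constant sign pulls out of the sum, the factor $p!/s!$ is inserted as in Lemma~\ref{lem:1}, and the residual sum $s!\sum_{r_{i,j}}\prod_{a,b}g_{ab}^{r_{a,b}}/r_{a,b}!$ is exactly $MG(p;s)$, giving $MG(s;p)=(-1)^{s_{0}+p_{0}}\tfrac{p!}{s!}MG(p;s)$. I would also remark that the argument never divides by any $g_{ab}$, so possible zero entries in the core pose no difficulty.
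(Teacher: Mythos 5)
Your proof is correct and takes essentially the approach the paper intends: the paper gives no explicit proof of Lemma~\ref{lem:2}, stating only that it ``can be obtained in a similar manner to Lemma~\ref{lem:1}'', and your argument --- the transposition substitution $r_{i,j}=h_{j,i}$ followed by splitting the index set into corner, first row, first column, and core --- is precisely that argument made explicit. Your key step, that the sign exponent equals $s_0+p_0-2r_{0,0}\equiv s_0+p_0 \pmod 2$ so the corner variable drops out and the sign is constant over the sum, is exactly the bookkeeping needed, and your remark that no division by a (possibly zero) core entry ever occurs is a correct and worthwhile observation.
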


Such symmetry relations yield additional orthogonality relations.

\begin{theorem}[Additional orthogonality relation]\label{thm:sym}
Let $G$ be a generalized Hadamard matrix. 
\begin{enumerate}
\item If in addition $G$ is symmetric, then 
\[
\sum_{s\in V\left(  n,q\right)  }MG\left(  p;s\right)  \overline{MG\left(s;t\right)  }=q^{n}\,\delta_{p,t}.
\]
\item If in addition $G$ has a symmetric core, $g_{0,j}=1$ for $j=0,\ldots,q-1$ and  $g_{i,0}=-1$ for $i=1,\ldots,q-1$,
then
\[
\sum_{s\in V\left(  n,q\right)  } (-1)^{s_0}\,MG\left(  p;s\right)  \overline{MG\left(s;t\right)  }=(-1)^{t_0}\,q^{n}\,\delta_{p,t}.
\]
\end{enumerate}

\end{theorem}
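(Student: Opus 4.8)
The plan is to combine the basic orthogonality relation from Theorem~\ref{thm:OR} with the symmetry identities from Lemma~\ref{lem:1} and Lemma~\ref{lem:2}. The key observation is that Theorem~\ref{thm:OR} already gives us a sum of the form $\sum_{s} \frac{1}{s!}\,MG(p;s)\,\overline{MG(t;s)} = \frac{q^n}{p!}\,\delta_{p,t}$, whereas the desired statements involve $\overline{MG(s;t)}$ rather than $\overline{MG(t;s)}$. Since $G$ is a generalized Hadamard matrix, its entries have modulus obeying $G\overline{G}^T = qI$, and in particular each symmetry relation lets me rewrite $MG(s;t)$ (with the arguments swapped) in terms of $MG(t;s)$. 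So the strategy is simply to use the appropriate lemma to convert the ``swapped'' $m$-polynomial into the form that appears in Theorem~\ref{thm:OR}, and then collect the factors.

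\begin{proof}
For part~(i), $G$ is symmetric, so Lemma~\ref{lem:1} applies with $s$ and $t$ in place of $p$ and $s$: we have $\overline{MG(s;t)} = \overline{\tfrac{t!}{s!}\,MG(t;s)} = \tfrac{t!}{s!}\,\overline{MG(t;s)}$, using that $t!$ and $s!$ are real. Substituting into the left-hand side gives
\[
\sum_{s\in V(n,q)} MG(p;s)\,\overline{MG(s;t)}
= t!\sum_{s\in V(n,q)} \frac{1}{s!}\,MG(p;s)\,\overline{MG(t;s)}
= t!\cdot\frac{q^n}{p!}\,\delta_{p,t},
\]
where the last equality is Theorem~\ref{thm:OR}. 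The Kronecker delta forces $p=t$ whenever the term is nonzero, so $\tfrac{t!}{p!}\,\delta_{p,t} = \delta_{p,t}$, and the right-hand side collapses to $q^n\,\delta_{p,t}$, as claimed.

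For part~(ii), $G$ has a symmetric core with the stated boundary values, so Lemma~\ref{lem:2} gives $\overline{MG(s;t)} = (-1)^{s_0+t_0}\,\tfrac{t!}{s!}\,\overline{MG(t;s)}$. Inserting the extra factor $(-1)^{s_0}$ from the statement, the two sign contributions combine as $(-1)^{s_0}\cdot(-1)^{s_0+t_0} = (-1)^{2s_0+t_0} = (-1)^{t_0}$, which is independent of the summation index $s$ and can be pulled out of the sum. What remains inside is again $t!\sum_{s} \tfrac{1}{s!}\,MG(p;s)\,\overline{MG(t;s)} = t!\cdot\tfrac{q^n}{p!}\,\delta_{p,t}$ by Theorem~\ref{thm:OR}, and the delta again reduces $\tfrac{t!}{p!}$ to $1$. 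Collecting the pulled-out sign yields $(-1)^{t_0}\,q^n\,\delta_{p,t}$, completing the proof.
\end{proof}

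I do not expect a serious obstacle here, since both parts reduce mechanically to Theorem~\ref{thm:OR} once the symmetry lemmas are applied. The only point requiring care is tracking the factorial factors: the lemmas introduce a ratio $\tfrac{t!}{s!}$, which cancels the $\tfrac{1}{s!}$ appearing in Theorem~\ref{thm:OR} and leaves a residual $t!$ that is absorbed by the Kronecker delta. In part~(ii) one must additionally verify that the two sign factors combine to an $s$-independent constant so that it factors out of the sum cleanly.
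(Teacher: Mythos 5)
Your proof is correct and takes exactly the paper's route: the paper's own proof is the one-line remark that the theorem ``follows immediately from Theorem~\ref{thm:OR}, Lemma~\ref{lem:1} and Lemma~\ref{lem:2}'', and your argument supplies precisely the bookkeeping it leaves implicit (applying the lemmas to rewrite $\overline{MG(s;t)}$, cancelling $t!/s!$ against the $1/s!$ and $q^n/p!$ via the Kronecker delta, and combining the signs $(-1)^{s_0}(-1)^{s_0+t_0}=(-1)^{t_0}$ in part~(ii)). No gaps; nothing further is needed.
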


\begin{proof}
This follows immediately from Theorem~\ref{thm:OR}, Lemma~\ref{lem:1} and Lemma~\ref{lem:2}.
\end{proof}

\begin{remark}
It turns out that a slight modification of the above $m$-polynomials have already been considered in \cite{Lou65,CL98}. For a matrix $G$ and $p,s\in V(n,q)$, \cite[Eq.~(2.3)]{CL98} defines polynomials $L_{p,s}(G)$ by $L_{p,s}(G)=\frac1{s!}\,MG(p;s)$ in our notation. For these polynomials, the following multiplication property is proved, see \cite[Theorem 3.2]{CL98},
\[
L_{p,t}(G_1\,G_2) =\sum_{s\in V\left(  n,q\right)  } s!\, L_{p,s}(G_1) L_{s,t}(G_2), 
\]
for $q\times q$ matrices $G_1, G_2$. With $G_1=G$, $G_2=\overline{G}^T$ and thus $G_1\,G_2=G\,\overline{G}^T=q\,I_q$, Theorem~\ref{thm:OR} becomes a special case of this equation by noting that $L_{s,t}(G^T)=L_{t,s}(G)$ (see \cite[Eq.~(4.5)]{CL98}) and $L_{p,t}(q\,I_q)=\frac{q^n}{p!}\,\delta_{p,t}$ (using the notation of Definition~\ref{def:2}, the polynomial on the left can only be nonzero if $r_{i,j}=0$ whenever $i\neq j$ and $p_i=r_{i,i}=t_i$ otherwise). 
\end{remark}

\section{Expansion of a Polynomial}\label{sec:poly}

The coefficients for the expansion of a polynomial in terms of Krawtchouk polynomials have been obtained (c.f.\ \cite{Williams}). We obtain a similar result for $m$-polynomials defined in terms of a generalized Hadamard matrix, using orthogonality properties.

\begin{theorem}[Polynomial expansion]\label{thm:poly}
 Let $G$ be a generalized Hadamard matrix and  let $x=\left(  x_{0},x_{1},\ldots,x_{q-1}\right)$ be a variable element of $V\left(  n,q\right)$.
\begin{enumerate}
\item\label{item:1} If $G$ is symmetric and the expansion of a polynomial $\gamma\left(  x\right)$ in terms of $m$-polynomials defined with respect to $G$ is (with coefficients $\alpha_s\in\mathbb{C}$)
\begin{equation}\label{eq:2}
\gamma\left(  x\right)  =\sum_{s\in V\left(  n,q\right)  }\alpha_{s}\cdot MG\left(  x;s\right). 
\end{equation}
Then, for all $\ell\in V\left(  n,q\right)$, 
\[
\alpha_{\ell}=q^{-n}\sum_{i\in V\left(  n,q\right)  }\gamma\left(  i\right)\cdot
\overline{MG\left( \ell,i\right)}.
\]
Similarly, if the expansion of a polynomial $\gamma\left(  x\right)$ in terms of $m$-polynomials is (with coefficients $\beta_s\in\mathbb{C}$)
\begin{equation*}
\gamma\left(  x\right)  =\sum_{s\in V\left(  n,q\right)  }\beta_{s}\cdot MG\left(  s;x\right). 
\end{equation*}
Then, for all $\ell\in V\left(  n,q\right)$, 
\[
\beta_{\ell}=q^{-n}\sum_{i\in V\left(  n,q\right)  }\gamma\left(  i\right)\cdot
\overline{MG\left( i,\ell\right)}.
\]
\item If $G$ has a symmetric core, $g_{0,j}=1$ for $j=0,\ldots,q-1$ and  $g_{i,0}=-1$ for $i=1,\ldots,q-1$, then the corresponding coefficients can be calculated for all $\ell\in V\left(  n,q\right)$ by
\[
\alpha_{\ell}= (-1)^{\ell_0}\, q^{-n}\sum_{i\in V\left(  n,q\right)  } (-1)^{i_0}\,\gamma\left(  i\right)\cdot
\overline{MG\left( \ell,i\right)},
\]
respectively,
\[
\beta_{\ell}= (-1)^{\ell_0}\,q^{-n}\sum_{i\in V\left(  n,q\right)  } (-1)^{i_0}\,\gamma\left(  i\right)\cdot
\overline{MG\left( i,\ell\right)}.
\]
\end{enumerate}
\end{theorem}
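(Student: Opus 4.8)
The plan is to treat each expansion as a change of basis and to recover its coefficients by projecting $\gamma$ against the appropriate $m$-polynomial, exactly as one inverts a finite Fourier expansion. Concretely, I would substitute the given ansatz into the claimed right-hand side, interchange the two finite sums over $V(n,q)$ (legitimate since both are finite), and collapse the inner sum using the orthogonality relations of Theorem~\ref{thm:sym} (which themselves rest on Theorem~\ref{thm:OR}). The only genuine subtlety is that these relations chain through the \emph{inner} argument, having the shape $MG(p;s)\overline{MG(s;t)}$, whereas a naive substitution sometimes leaves the summation index in the \emph{outer} slots; reconciling the two is where the symmetry Lemmas~\ref{lem:1} and~\ref{lem:2} do the work.

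I would start with the $\beta$-expansion, since it is the direct case. Writing $\gamma(i)=\sum_{s\in V(n,q)}\beta_s\,MG(s;i)$ and inserting this into the proposed formula gives
\[
q^{-n}\sum_{i\in V(n,q)}\gamma(i)\,\overline{MG(i;\ell)} = q^{-n}\sum_{s\in V(n,q)}\beta_s\sum_{i\in V(n,q)}MG(s;i)\,\overline{MG(i;\ell)}.
\]
Here the summation index $i$ already occupies the inner slots, so part~(i) of Theorem~\ref{thm:sym} applies verbatim with $p=s$, $t=\ell$, yielding $\sum_i MG(s;i)\overline{MG(i;\ell)}=q^n\delta_{s,\ell}$; the $s$-sum then collapses to $\beta_\ell$, as desired.

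For the $\alpha$-expansion I expect the main obstacle. Substituting $\gamma(i)=\sum_s\alpha_s\,MG(i;s)$ into $q^{-n}\sum_i\gamma(i)\,\overline{MG(\ell;i)}$ produces the inner sum $\sum_i MG(i;s)\,\overline{MG(\ell;i)}$, in which $i$ sits in the outer slots and Theorem~\ref{thm:sym} cannot be applied directly. I would invoke Lemma~\ref{lem:1} on each factor, rewriting $MG(i;s)=\tfrac{s!}{i!}MG(s;i)$ and $\overline{MG(\ell;i)}=\tfrac{i!}{\ell!}\overline{MG(i;\ell)}$, so that the offending $i!$ cancel and
\[
\sum_{i\in V(n,q)}MG(i;s)\,\overline{MG(\ell;i)} = \frac{s!}{\ell!}\sum_{i\in V(n,q)}MG(s;i)\,\overline{MG(i;\ell)} = \frac{s!}{\ell!}\,q^n\delta_{s,\ell}.
\]
The prefactor $s!/\ell!$ equals $1$ precisely on the diagonal $s=\ell$ where $\delta_{s,\ell}$ is supported, so the right-hand side is $q^n\delta_{s,\ell}$ and the $s$-sum again collapses to $\alpha_\ell$. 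Tracking which slot is summed, and checking that the factorials introduced by Lemma~\ref{lem:1} reduce to $1$ on the support of the Kronecker delta, is the crux of the argument.

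Item~(ii) then follows along identical lines, replacing part~(i) of Theorem~\ref{thm:sym} by part~(ii) and Lemma~\ref{lem:1} by Lemma~\ref{lem:2}. The only new bookkeeping is the sign weights: inserting the factor $(-1)^{i_0}$ into the projection sum matches the $(-1)^{s_0}$ of Theorem~\ref{thm:sym}(ii), which produces a factor $(-1)^{\ell_0}$ on the diagonal, and multiplying through by $(-1)^{\ell_0}$ (using $(-1)^{2\ell_0}=1$) yields the stated formulae. As in item~(i), the $\beta$-case applies the orthogonality relation directly, while the $\alpha$-case first uses Lemma~\ref{lem:2} to move $i$ into the inner slots; one then verifies that the combined sign and factorial prefactors collapse to the advertised constant exactly where $s=\ell$.
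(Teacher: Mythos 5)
Your proposal is correct --- I checked the sign and factorial bookkeeping for item (ii) as well, and it goes through exactly as you sketch --- and it shares the paper's overall strategy: substitute the expansion, interchange the two finite sums over $V(n,q)$, and collapse the inner sum with Theorem~\ref{thm:sym}. Where you diverge is at the one delicate step, which you also diagnose slightly differently than the paper resolves it. In the $\alpha$-case the inner sum is $\sum_i MG(i;s)\,\overline{MG(\ell;i)}$; since the factors commute, the index $i$ does in fact occupy the inner slots (write the product as $\overline{MG(\ell;i)}\,MG(i;s)$) --- the genuine mismatch with Theorem~\ref{thm:sym}(i) is that the complex conjugation sits on the wrong factor, not the slot placement. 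The paper fixes this in one line, without re-invoking Lemma~\ref{lem:1}:
\[
\sum_{i\in V(n,q)} \overline{MG(\ell;i)}\,MG(i;s) \;=\; \overline{\sum_{i\in V(n,q)} MG(\ell;i)\,\overline{MG(i;s)}} \;=\; \overline{q^{n}\,\delta_{\ell,s}} \;=\; q^{n}\,\delta_{\ell,s},
\]
using only that the right-hand side of the orthogonality relation is real. Your route --- flipping both factors with Lemma~\ref{lem:1} (resp.\ Lemma~\ref{lem:2}) and observing that the prefactor $s!/\ell!$ (resp.\ the combined sign $(-1)^{s_0+\ell_0}$) equals $1$ on the support of $\delta_{s,\ell}$ --- is equally valid, just heavier; its merit is that it makes the role of the symmetry lemmas explicit rather than leaving them hidden inside the proof of Theorem~\ref{thm:sym}, and the on-the-diagonal cancellation argument is exactly the right justification. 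Note also that the paper writes out only the first $\alpha$-formula and declares the remaining three ``similar'', whereas your observation that the $\beta$-case needs no trick at all --- the summation index lands in the shape of Theorem~\ref{thm:sym}(i) verbatim --- is a clarification the paper omits.
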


\begin{proof}
Let $x=i$. Multiplying both sides of Eq.~\eqref{eq:2} by $\overline{MG\left(  \ell,i\right)  }$ and summing each side over all elements of $V\left(  n,q\right)  $ yields
\begin{eqnarray*}
\sum_{i\in V\left(  n,q\right)  }\gamma\left(  i\right)\,  \overline{MG\left( \ell,i\right)  } 
& = & 
\sum_{i\in V\left(  n,q\right)  }\,\sum_{s\in V\left(n,q\right)  }\alpha_{s}\,\overline{MG\left(\ell,i\right)  }\,MG\left(  i,s\right)\\
& = & \sum_{s\in V\left(  n,q\right)  }\alpha_{s}\,\sum_{i\in V\left(  n,q\right)} \overline{MG\left(  l,i\right)  }\, MG\left(  i,s\right)\\
& = & \sum_{s\in V\left(  n,q\right)  }\alpha_{s}\,\overline{\sum_{i\in V\left(  n,q\right)} {MG\left(  l,i\right)  }\, \overline{MG\left(  i,s\right)} }\\
& \stackrel{(\star)}{=} & \sum_{s\in V\left(  n,q\right)  }\alpha_{s}\cdot\,q^{n}\cdot \delta_{\ell,s}\\
&  = & \alpha_{\ell}\,q^{n},
\end{eqnarray*}
where step $(\star)$ uses Theorem~\ref{thm:sym}; the first result in \ref{item:1} follows. The other results are proved similarly.
\end{proof}

\section{Mathematica Code}\label{sec:math}

Here, we present MATHEMATICA code
to obtain $m$-polynomials.
First, we specify the matrix $G$ and the length $n$:
\begin{verbatim} 
> g = {{1, 1}, {-1, 1}}
> n = 6
\end{verbatim} 
From this we can calculate $q$, the set $V(n,q)$ and initialize the variable $z$:
\begin{verbatim} 
> q = Union[Dimensions[g]][[1]]
> pall = Sort[Flatten[Map[Permutations, 
              IntegerPartitions[n, {q}, Range[0, n]]], 1]]
> z = Table[f[i], {i, q}]
\end{verbatim} 
Using Theorem~\ref{thm:pol}, we obtain the $m$-polynomial via its generator:
\begin{verbatim} 
> generator[s_] := Expand[Product[
          (Sum[g[[i, j]] z[[j]], {j, 1, q}])^(s[[i]]), {i, 1, q}]]
> mg[p_, s_] := Coefficient[generator[s], Inner[Power, z, p, Times]]
\end{verbatim} 
The values of the $m$-polynomial can be shown using the following command:
\begin{verbatim} 
> TableForm[Table[mg[pall[[i]], pall[[j]]], {i, Length[pall]}, 
            {j, Length[pall]}], TableHeadings -> {pall, pall}, 
             TableDepth -> 2]
\end{verbatim} 
For the above chosen matrix $G$ and length $n$, the output for $MG(p;s)$ looks as follows ($p$ denotes the row and $s$ the column, e.g., $MG((2,4);(3,3))=-3$):
\[
\begin{array}{r|ccccccc}
& \{0,6\} & \{1,5\} & \{2,4\} & \{3,3\} & \{4,2\} & \{5,1\} & \{6,0\} \\ \hline
\{0,6\} & 1 & -1 & 1 & -1 & 1 & -1 & 1 \\
\{1,5\} &  -6 & 4 & -2 & 0 & 2 & -4 & 6\\
\{2,4\} &  15 & -5 & -1 & 3 & -1 & -5 & 15\\
\{3,3\} & -20 & 0& 4& 0& -4& 0& 20 \\
\{4,2\} & 15&	5&	-1&	-3&	-1&	5&	15 \\
\{5,1\} & -6	& -4&	-2&	0&	2&	4&	6 \\
\{6,0\} & 1 & 1&	1&	1&	1&	1&	1 \\
\end{array}
\]
Thus, we are able to evaluate the $m$-polynomial $MG(p;s)$ for any $p,s\in V(n,q)$. In this case, we can express the $m$-polynomials as univariant polynomials in $(s_0-s_1)$ as follows\footnote
{
	We obtained these with the help of the MATHEMATICA function \texttt{Fit}.
} 
(we also recall that $|(s_0,s_1)|=s_0+s_1=6$ here):
\begin{eqnarray*}
MG\left((0,6);(s_0,s_1)\right) & = & \frac1{720}\,(s_0-s_1)^6-\frac5{72}\,(s_0-s_1)^4+\frac{34}{45}\,(s_0-s_1)^2-1  \\
MG\left((1,5);(s_0,s_1)\right) & = & \frac1{120}\,(s_0-s_1)^5-\frac13\,(s_0-s_1)^3+\frac{11}5\,(s_0-s_1)  \\
MG\left((2,4);(s_0,s_1)\right) & = &  \frac1{24}\,(s_0-s_1)^4-\frac76\,(s_0-s_1)^2+3 \\
MG\left((3,3);(s_0,s_1)\right) & = & \frac16\,(s_0-s_1)^3-\frac83\,(s_0-s_1) \\
MG\left((4,2);(s_0,s_1)\right) & = & \frac12\,(s_0-s_1)^2-3\\
MG\left((5,1);(s_0,s_1)\right) & = & (s_0-s_1)\\
MG\left((6,0);(s_0,s_1)\right) & = & 1 \\
\end{eqnarray*}
Similarly, we may express the $m$-polynomials as univariant polynomials in $(p_0-p_1)$ as follows:
\begin{eqnarray*}
MG\left((p_0,p_1);(0,6)\right) & = &\frac{77}{3840}\,(p_0-p_1)^6-\frac{203}{192}\,(p_0-p_1)^4+\frac{1519}{120}\,(p_0-p_1)^2-20\\
MG\left((p_0,p_1);(1,5)\right) & = & \frac7{640}\,(p_0-p_1)^5-\frac{49}{96}\,(p_0-p_1)^3+\frac{131}{30}\,(p_0-p_1) \\
MG\left((p_0,p_1);(2,4)\right) & = & -\frac7{3840}\,(p_0-p_1)^6+\frac{7}{64}\,(p_0-p_1)^4-\frac{199}{120}\,(p_0-p_1)^2+4\\
MG\left((p_0,p_1);(3,3)\right) & = & -\frac7{1920}\,(p_0-p_1)^5+\frac{19}{96}\,(p_0-p_1)^3-\frac{67}{30}\,(p_0-p_1) \\
MG\left((p_0,p_1);(4,2)\right) & = &  \frac7{11520}\,(p_0-p_1)^6-\frac{25}{576}\,(p_0-p_1)^4+\frac{329}{360}\,(p_0-p_1)^2-4\\
MG\left((p_0,p_1);(5,1)\right) & = &  \frac1{384}\,(p_0-p_1)^5-\frac{17}{96}\,(p_0-p_1)^3+\frac{19}6\,(p_0-p_1) \\
MG\left((p_0,p_1);(6,0)\right) & = & -\frac1{2304}\,(p_0-p_1)^6+\frac{23}{576}\,(p_0-p_1)^4-\frac{101}{72}\,(p_0-p_1)^2+20  \\
\end{eqnarray*}
By Theorem~\ref{thm:poly}, we can express any polynomial $\gamma(x)=\gamma(x_0,\ldots,x_{q-1})$ in terms of $m$-polynomials. Using MATHEMATICA to find the corresponding coefficients $\alpha_s$ and $\beta_s$ in the expansion in terms of $m$-polynomials, e.g., for the polynomial $p(x_0,x_1)=x_0\,x_1$, is achieved as follows:
\begin{verbatim}
> p[{x0_, x1_}] := x0 x1
> Table[1/q^n Total[Table[p[pall[[i]]] 
          Conjugate[mg[pall[[j]], pall[[i]]]], {i, Length[pall]}]],
           {j, Length[pall]}]
> Table[1/q^n Total[Table[p[pall[[i]]] 
          Conjugate[mg[pall[[i]], pall[[j]]]], {i, Length[pall]}]],
           {j, Length[pall]}]
\end{verbatim}
The output of the later two lines is
\[
\left\{-\frac{35}{64},0,-\frac{7}{64},0,\frac{39}{64},0,\frac{3}{64}\right\}\qquad\text{and}\qquad \left\{0,0,0,0,-\frac{1}{2},0,\frac{15}{2}\right\},
\]
giving the list of coefficients $\alpha_s$ respectively $\beta_s$ for $s\in V(n,q)$. I.e., the expansion in terms of $m$-polynomials here reads:
\begin{eqnarray*}
x_0\,x_1 & = & -\frac{35}{64}\,MG\left((x_0,x_1);(0,6)\right)-\frac{7}{64}\,MG\left((x_0,x_1);(2,4)\right)\\ & & \quad +\frac{39}{64}\,MG\left((x_0,x_1);(4,2)\right)+\frac{3}{64}\,MG\left((x_0,x_1);(6,0)\right) \\
x_0\,x_1 & = & -\frac12\,MG\left((4,2);(x_0,x_1)\right)+\frac{15}2\,MG\left((6,0);(x_0,x_1)\right) \\
\end{eqnarray*}
As a check, we have (recall that $x_0+x_1=6$)
\begin{multline*}
 -\frac12\,MG\left((4,2);(x_0,x_1)\right)+\frac{15}2\,MG\left((6,0);(x_0,x_1)\right)\\ =\ -\frac12\cdot\left(\frac12(x_0-x_1)^2-3\right)+\frac{15}2\cdot 1 \ =\ x_0\,(6-x_0)\ =\ x_0\,x_1.
\end{multline*}

For a generalized Hadamard matrix with symmetric core, $g_{0,j}=1$ for $j=0,\ldots,q-1$ and  $g_{i,0}=-1$ for $i=1,\ldots,q-1$, e.g.,
\[
G=\begin{pmatrix} 1 & 1 & 1 \\ -1 & \frac{1-\sqrt{3}}2 &  \frac{1+\sqrt{3}}2 \\  -1 & \frac{1+\sqrt{3}}2 &  \frac{1-\sqrt{3}}2 \end{pmatrix},
\]
the above MATHEMATICA code using Theorem~\ref{thm:poly} has to be modified as follows:
\begin{verbatim}
> Table[1/q^n Total[Table[(-1)^(pall[[i, 1]] + pall[[j, 1]])  
          p[pall[[i]]] Conjugate[mg[pall[[j]], pall[[i]]]], 
           {i, Length[pall]}]], {j, Length[pall]}]
> Table[1/q^n Total[Table[(-1)^(pall[[i, 1]] + pall[[j, 1]]) 
          p[pall[[i]]] Conjugate[mg[pall[[i]], pall[[j]]]], 
           {i, Length[pall]}]], {j, Length[pall]}]
\end{verbatim}

\section{Outlook}

Krawtchouk polynomials and their generalisation appear in many areas of mathematics, see\footnote
{
	A website called the ``Krawtchouk Polynomials Home Page'' by V.~Zelenkov at \texttt{http://orthpol.narod.ru/eng/index.html} collecting material about M.~Krawtchouk and the polynomials that bear his name has, unfortunately, not been updated in a while.
} 
\cite{FK05}: harmonic analysis \cite{Kra29,Dunkl,Szego}, statistics \cite{Eagleson}, combinatorics and coding theory \cite{Pdelsarte,Habsieger,Krasikov,Lev95,Williams}, probability theory \cite{FK05}, representation theory (e.g., of quantum groups) \cite{Atakis,CL98,Lou65}, difference equations \cite{BFSVZ}, and pattern recognition \cite{SD12}. However, our motivation in this article was primarily driven by generalising the results obtained in \cite{Sookoo} -- and thus shedding more light on the qualitative structure of (generalisations of) Krawtchouk polynomials -- and not yet with a specific application in mind. By providing the MATHEMATICA code to obtain $m$-polynomials, we also hope that other researchers are encouraged to explore them and see if they can be used in their research.


\end{document}